\documentclass[11pt]{amsart}

\usepackage[T1]{fontenc}      % T1 makes \textquotedbl available so listings can render `"` in code
\usepackage[utf8]{inputenc}

\usepackage[margin=1.1in]{geometry}
\usepackage{amsmath, amssymb, amsthm, mathtools}
\usepackage{microtype}
\usepackage{enumitem}
\usepackage[hidelinks]{hyperref}
\usepackage{listings}

\lstdefinestyle{python}{
  language=Python,
  basicstyle=\ttfamily\footnotesize,
  keywordstyle=\bfseries,
  commentstyle=\itshape,
  numbers=left,
  numberstyle=\tiny,
  numbersep=6pt,
  breaklines=true,
  breakatwhitespace=true,
  showstringspaces=false,
  frame=single,
  framesep=4pt,
  tabsize=2,
  xleftmargin=14pt,
  columns=fullflexible,
  upquote=true,
}

\theoremstyle{plain}
\newtheorem{theorem}{Theorem}
\newtheorem{lemma}[theorem]{Lemma}

\theoremstyle{definition}
\newtheorem{remark}[theorem]{Remark}
\newtheorem{example}[theorem]{Example}

\newcommand{\ZZ}{\mathbb{Z}}
\newcommand{\QQ}{\mathbb{Q}}

\newcommand{\Hn}{H_{n}}
\newcommand{\OEIS}{OEIS}

\title[A general-$m$ harmonic-number identity]%
{A proof of Bala's general-$m$ representation of the harmonic numbers}

\author{Tong Niu}
\email{mrnt0810@gmail.com}
\date{\today}

\subjclass[2020]{05A19, 05A10, 11B65, 11B83}
\keywords{Harmonic number; binomial sum; Lagrange--Bürmann formula;
  formal power series; Fuss--Catalan; OEIS A001008}

\begin{document}

\begin{abstract}
For every nonzero integer $m$ and every integer $n \ge 1$, the
$n$\textsuperscript{th} harmonic number $H_n = 1 + \tfrac12 + \dots + \tfrac1n$
satisfies the identity
\[
  H_n \;=\; \frac{1}{m}\,\sum_{k=1}^{n} \frac{(-1)^{k+1}}{k}\,
              \binom{m k}{k}\binom{n + (m-1)k}{n - k}.
\]
The cases $m = 1$ and $m = 2$ are classical; for general nonzero
integer $m$ the identity was conjectured by P.~Bala in the OEIS entry
A001008 in 2022 and remained open. We prove it here, working
throughout in $\QQ[[x]]$. The proof reduces, via a substitution
$u = x/(1-x)^m$, to two formal-power-series identities: a
Lagrange--Bürmann evaluation of $\sum_{k\ge1} \binom{mk}{k} u^k / k$,
and the fixed-point fact that under that substitution the unique
solution $v(u)$ of $v = u(1-v)^{m}$ is $v = x$. The argument extends
verbatim to arbitrary complex $m \ne 0$.
\end{abstract}

\maketitle

%----------------------------------------------------------------------
\section{Introduction}\label{sec:intro}
%----------------------------------------------------------------------

The numerators of the harmonic numbers
\[
  \Hn \;:=\; \sum_{i=1}^{n} \frac{1}{i}, \qquad n \ge 1, \quad H_0 := 0,
\]
form the integer sequence \OEIS{}~A001008. The On-Line Encyclopedia of
Integer Sequences (OEIS) hosts a number of binomial-coefficient
representations of $H_n$ contributed over time as comments to that
entry. One of these, contributed by P.~Bala on March~4, 2022, is the
following parametric family.

\begin{theorem}[Bala's identity, conjectured 2022]\label{thm:main}
For every nonzero integer $m$ and every integer $n \ge 1$,
\begin{equation}\label{eq:main}
  \Hn
  \;=\;
  \frac{1}{m}\,\sum_{k=1}^{n} \frac{(-1)^{k+1}}{k}\,
        \binom{mk}{k}\binom{n + (m-1)k}{n - k}.
\end{equation}
\end{theorem}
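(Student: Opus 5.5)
We will prove \eqref{eq:main} by comparing generating functions in $\QQ[[x]]$, using the generating function of the harmonic numbers,
\[
  \sum_{n\ge1} H_n x^n \;=\; \frac{1}{1-x}\log\frac{1}{1-x}.
\]
The plan is to show that the right-hand side of \eqref{eq:main}, multiplied by $x^n$ and summed over $n\ge1$, gives the same series.

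\textbf{Step 1: interchange and resum over $n$.} For fixed $k\ge1$, write $j=n-k$. Then $n+(m-1)k = j+mk$, and the identity $\sum_{j\ge0}\binom{j+mk}{j}x^j=(1-x)^{-mk-1}$ holds in $\QQ[[x]]$ for any $m$ (for negative $mk$ it is the generalized binomial series, and the terms vanish appropriately). Hence
\[
  \sum_{n\ge k}\binom{n+(m-1)k}{n-k}x^n \;=\; \frac{1}{1-x}\Bigl(\frac{x}{(1-x)^m}\Bigr)^{k}.
\]
Restricting to $n\ge k$ is harmless, since $\binom{\cdot}{n-k}=0$ when $n<k$. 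Setting $u=x/(1-x)^m$, which has zero constant term so that substitution is legitimate, the generating function of the right-hand side becomes
\[
  \frac{1}{1-x}\cdot\frac1m\,F(-u), \qquad F(w)=-\sum_{k\ge1}\frac{1}{k}\binom{mk}{k}w^k .
\]
More precisely, the series is $\frac{1}{m(1-x)}\sum_{k\ge1}\frac{(-1)^{k+1}}{k}\binom{mk}{k}u^k$. It therefore suffices to prove
\[
  S(u) := \sum_{k\ge1}\frac{(-1)^{k+1}}{k}\binom{mk}{k}u^k \;=\; m\log\frac{1}{1-x}.
\]

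\textbf{Step 2: Lagrange--Bürmann.} Let $v=v(u)$ be the unique power series with $v=u\,\phi(v)$, where $\phi(v)=(1-v)^{-m}$. This choice of $\phi$ is suggested by $u=x(1-x)^{-m}$ inverting as $x = u(1-x)^{m}$; I will fix the sign and exponent conventions to match the paper's $v=u(1-v)^m$ after the computation. Lagrange--Bürmann with $H(v)=\log\frac1{1-v}$ gives
\[
  [u^k]H(v) = \frac1k[t^{k-1}]\,\frac{1}{1-t}(1-t)^{-mk}
  = \frac1k\binom{mk+k-1}{k-1}\cdot(\text{sign}).
\]
This binomial does not directly match $\binom{mk}{k}$. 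The better choice is $H(v)=\log(1-v)$, or to use the variant $[u^k]H(v)=[t^k]H(t)\phi(t)^k(1-t\phi'(t)/\phi(t))$. With $\phi=(1-t)^{m}$ and $w=-u$, one has $1-t\phi'/\phi = (1-t+mt)/(1-t)$, and coefficient extraction should produce $\binom{mk}{k}$ through the combination $\binom{mk}{k}-\binom{mk}{k-1}$-type telescoping. I therefore plan to choose $H(t)=\log\frac{1}{1-t}$ and $\phi(t)=(1-t)^{m}$ applied at $-u$, check that $\frac{d}{du}$ of both sides agree, and conclude $S(u)=m\log\frac{1}{1-v}$ with $v$ the root of $v=-u(1-v)^m$ or its sign-adjusted form.

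\textbf{Step 3: the fixed point.} It remains to check that under $u=x/(1-x)^m$ the root $v$ is exactly $x$, using uniqueness of the power-series solution of the functional equation. This is immediate once the functional equation is written as $x=u(1-x)^m$. Then $S(u)=m\log\frac1{1-x}$, and dividing by $m(1-x)$ recovers $\sum_n H_nx^n$; comparing coefficients gives \eqref{eq:main}. Since everything is polynomial in $m$ coefficientwise, the argument works for complex $m\ne0$.

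The main obstacle is Step 2: getting the signs and the exact Lagrange form right so that the coefficient comes out as $\frac{(-1)^{k+1}}{k}\binom{mk}{k}\cdot\frac1m$ rather than a neighboring binomial. I would first verify it by differentiating in $u$ and comparing with the known closed form of $\sum_k\binom{mk}{k}w^k = \frac{1}{1-m+m/(1-v)}$-type expressions, and sanity-check small $k$ for $m=2$.
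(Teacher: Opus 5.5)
Your Steps 1 and 3 are exactly the paper's reduction (the swap, the resummation via $\sum_{j}\binom{j+mk}{j}x^j=(1-x)^{-mk-1}$, the substitution $u=x/(1-x)^m$) and its fixed-point lemma. The gap is Step 2, which is the heart of the argument and which you do not carry out. You make one computation with the wrong $\phi$, then leave a hedge between incompatible conventions (``applied at $-u$'', ``$v=-u(1-v)^m$ or its sign-adjusted form''), and defer the rest to a planned derivative check. The sign matters. Step 3 needs $v$ to be the root of $v=u(1-v)^m$ with the \emph{same} $u$. Under $u=x/(1-x)^m$, the root of $v=-u(1-v)^m$ is not $x$: substituting $v=x$ gives $-x$ on the right. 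So as written, nothing establishes $S(u)=m\log\frac{1}{1-v}$ for the particular $v$ that Step 3 handles.

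Your diagnosis of the mismatch is also off. The binomial $\binom{mk+k-1}{k-1}$ appeared only because you took $\phi(t)=(1-t)^{-m}$. Inverting $u=x(1-x)^{-m}$ gives $x=u(1-x)^{m}$, i.e.\ $\phi(t)=(1-t)^{m}$. With that $\phi$ and your original $H(t)=\log\frac{1}{1-t}$, the basic Lagrange formula works immediately, with no telescoping and no $u\mapsto -u$:
\[
  [u^k]\log\frac{1}{1-v}
  =\frac1k\,[t^{k-1}]\,(1-t)^{-1}(1-t)^{mk}
  =\frac{(-1)^{k-1}}{k}\binom{mk-1}{k-1}
  =\frac{(-1)^{k-1}}{mk}\binom{mk}{k}.
\]
The last step is the absorption identity $\binom{mk}{k}=\frac{mk}{k}\binom{mk-1}{k-1}$, which holds for generalized binomials. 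So negative $m$ is covered too, and $\phi(0)=1$ there as well. This gives $S(u)=m\log\frac{1}{1-v}$ with $v=u(1-v)^m$ in one line. It is more direct than the paper's route, which applies Lagrange--B\"urmann with $f=1$ to $w=u(1+w)^m$ to get $\sum_k\binom{mk}{k}u^k=(1+w)/(1+(1-m)w)$, integrates $dT/dw=m/(1+w)$, and then substitutes $u\mapsto -u$. With this computation supplied, and Step 3 made precise (e.g.\ via the compositional inverse of $x\mapsto x/(1-x)^m$ plus uniqueness of the solution in $u\,\QQ[[u]]$), your proof is complete.
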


The case $m = 1$ collapses (after $\binom{n}{n-k} = \binom{n}{k}$) to
the classical identity
\(
  \Hn = \sum_{k=1}^{n} (-1)^{k+1} \binom{n}{k}/k,
\)
which appears in many sources, e.g.\ \cite[(5.41)]{ConcreteMath}. The
case $m = 2$,
\[
  \Hn
  = \tfrac{1}{2}\sum_{k=1}^{n} \frac{(-1)^{k+1}}{k}
       \binom{2k}{k}\binom{n+k}{n-k},
\]
was contributed (also as a conjecture) to A001008 by G.~Detlefs in
April 2013 and proved shortly thereafter. For $m \ge 3$ and for every
negative integer $m$, the identity~\eqref{eq:main} appears to have
remained open until now.

In this note we prove Theorem~\ref{thm:main} and observe that the
proof in fact establishes the identity for every complex $m \ne 0$
when both sides are read as polynomials in $m$ for fixed $n$
(Remark~\ref{rem:complex-m}).

The proof is via formal power series. In Section~\ref{sec:reduction}
we sum the right-hand side of~\eqref{eq:main} as an ordinary
generating function $A_m(x) = \sum_n S_m(n)\, x^n$ and reduce it,
under the substitution $u = x/(1-x)^m$, to a sum
$T(u) = \sum_{k \ge 1} (-1)^{k+1}\binom{mk}{k} u^k / k$.
Section~\ref{sec:lagrange} evaluates $T(u)$ via a Lagrange--Bürmann
calculation. Section~\ref{sec:substitution} closes the loop with a
formal fixed-point lemma: under $u = x/(1-x)^m$, the unique solution
$v(u)$ of $v = u(1-v)^m$ in $x\,\QQ[[x]]$ is $v = x$. The conclusion
$A_m(x) = -\log(1-x)/(1-x) = \sum_n \Hn\, x^n$ is then immediate.

Throughout, we use the generalized binomial coefficient
$\binom{a}{b} = a(a-1)\cdots(a-b+1)/b!$ for $a \in \ZZ$, $b \in
\ZZ_{\ge 0}$, so that~\eqref{eq:main} is meaningful for all nonzero
integer $m$. For negative $m$ this binomial may take negative or zero
values term by term, but the sum still equals $\Hn$ (see
Example~\ref{ex:m-neg}).

%----------------------------------------------------------------------
\section{The generating function and a substitution}\label{sec:reduction}
%----------------------------------------------------------------------

For nonzero integer $m$, write $S_m(n)$ for the right-hand side
of~\eqref{eq:main}, with the convention $S_m(0) = 0$:
\[
  S_m(n)
  \;:=\;
  \frac{1}{m}\,\sum_{k=1}^{n} \frac{(-1)^{k+1}}{k}\,
       \binom{mk}{k}\binom{n + (m-1)k}{n - k}.
\]
Let
\[
  A_m(x) \;:=\; \sum_{n \ge 0} S_m(n)\, x^n
       \;\in\; \QQ[[x]].
\]
Theorem~\ref{thm:main} is equivalent to the formal power-series
identity $A_m(x) = -\log(1-x)/(1-x)$, since
\(
  -\log(1-x)/(1-x) = \sum_{n \ge 1} H_n\, x^n.
\)
We compute $A_m(x)$ in two steps.

\medskip

\noindent\textbf{Step 1: switch order of summation.} We have
\begin{align}
  A_m(x)
  &= \frac{1}{m} \sum_{k \ge 1} \frac{(-1)^{k+1}}{k} \binom{mk}{k}
       \sum_{n \ge k} \binom{n + (m-1)k}{n-k}\, x^{n}.
  \label{eq:swap}
\end{align}
Setting $j = n - k$ in the inner sum and using
$\sum_{j \ge 0} \binom{j + r}{j}\, x^{j} = (1-x)^{-r-1}$ with
$r = mk$,
\begin{equation}\label{eq:inner}
  \sum_{n \ge k} \binom{n + (m-1)k}{n - k}\, x^{n}
  = x^{k} \sum_{j \ge 0} \binom{j + mk}{j}\, x^{j}
  = \frac{x^{k}}{(1-x)^{mk + 1}}.
\end{equation}

\medskip

\noindent\textbf{Step 2: introduce $u = x/(1-x)^m$.}
Substituting~\eqref{eq:inner} into~\eqref{eq:swap} and pulling out
the factor $(1-x)^{-1}$,
\begin{equation}\label{eq:Am-Tu}
  A_m(x)
  = \frac{1}{m\,(1-x)}\,\sum_{k \ge 1}
       \frac{(-1)^{k+1}}{k}\binom{mk}{k}\, u^{k},
  \qquad
  u \;:=\; \frac{x}{(1-x)^{m}}.
\end{equation}

The proof of Theorem~\ref{thm:main} therefore reduces to evaluating
the inner sum in~\eqref{eq:Am-Tu} as a formal power series in $u$,
and then identifying the result after $u \mapsto x/(1-x)^m$.

%----------------------------------------------------------------------
\section{A Lagrange--Bürmann identity}\label{sec:lagrange}
%----------------------------------------------------------------------

The next lemma is the key formal-series identity. Its proof uses the
Lagrange--Bürmann formula in the coefficient-extraction form (see e.g.\
Stanley~\cite[Theorem~5.4.2]{StanleyEC2}; for a short elementary inductive
proof of the Lagrange inversion formula, see Surya and
Warnke~\cite{SuryaWarnke}).

\begin{lemma}\label{lem:LB}
For every nonzero integer $m$,
\begin{equation}\label{eq:L}
  \sum_{k \ge 1} \frac{1}{k}\binom{mk}{k}\, u^{k}
  \;=\; m\,\log(1 + w),
\end{equation}
where $w = w(u) \in u\,\QQ[[u]]$ is the unique formal power series
satisfying
\begin{equation}\label{eq:Phi}
   w \;=\; u\,(1 + w)^{m}.
\end{equation}
Equivalently,
\begin{equation}\label{eq:Lprime}
  \sum_{k \ge 1} \frac{(-1)^{k+1}}{k}\binom{mk}{k}\, u^{k}
  \;=\; -\,m\,\log(1 - v),
\end{equation}
where $v = v(u) \in u\,\QQ[[u]]$ is the unique formal power series
satisfying $v = u\,(1 - v)^m$.
\end{lemma}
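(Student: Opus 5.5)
We will prove \eqref{eq:L} by extracting coefficients of $m\log(1+w)$ with the Lagrange--Bürmann formula, and then deduce \eqref{eq:Lprime} from \eqref{eq:L} by a sign substitution.

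\textbf{Existence and uniqueness of $w$.} Set $\phi(t) = (1+t)^m \in \QQ[[t]]$. Since $\phi(0)=1\neq 0$, the equation $w = u\,\phi(w)$ has a unique solution $w \in u\,\QQ[[u]]$. This can be seen by solving for coefficients recursively: the coefficient of $u^k$ in $u\phi(w)$ involves only coefficients of $w$ of order $<k$. This holds equally for negative $m$, since $(1+t)^m$ is still a formal power series with constant term $1$.

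\textbf{Coefficient extraction.} We apply the Lagrange--Bürmann formula in the form
\[
  [u^k]\,H(w(u)) = \frac1k\,[t^{k-1}]\,H'(t)\,\phi(t)^k, \qquad k\ge 1,
\]
with $H(t) = m\log(1+t)$. Then $H'(t) = m(1+t)^{-1}$, and
\[
  [u^k]\, m\log(1+w) = \frac{m}{k}\,[t^{k-1}](1+t)^{mk-1} = \frac{m}{k}\binom{mk-1}{k-1}.
\]
The identity $\frac{m}{k}\binom{mk-1}{k-1} = \frac1k\cdot\frac{mk}{k}\binom{mk-1}{k-1} = \frac1k\binom{mk}{k}$ is the absorption identity $\binom{a}{k} = \frac{a}{k}\binom{a-1}{k-1}$. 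It is valid for the generalized binomial with any integer $a$, hence also for negative $m$. Both sides of \eqref{eq:L} have zero constant term, so \eqref{eq:L} follows.

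\textbf{The equivalent form.} Replace $u$ by $-u$ in \eqref{eq:L}. This substitution is a legitimate automorphism of $\QQ[[u]]$. Then $\tilde w(u) := w(-u)$ satisfies $\tilde w = -u(1+\tilde w)^m$. Setting $v := -\tilde w$ gives $v = u(1-v)^m$, and by the uniqueness established above this $v$ is the series named in the lemma. Hence
\[
  \sum_{k\ge1}\frac{(-1)^k}{k}\binom{mk}{k}u^k = m\log(1-v).
\]
Multiplying by $-1$ gives \eqref{eq:Lprime}.

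The only delicate point is ensuring that the Lagrange--Bürmann formula, as cited from \cite{StanleyEC2}, applies with $H$ a composition like $\log(1+t)$ and with $\phi$ having a negative exponent. Both are handled because $\phi(0)\ne0$ and $H\in\QQ[[t]]$ is an arbitrary formal power series, so no analytic input is needed. Apart from that, the argument is routine.
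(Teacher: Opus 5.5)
Your proof is correct, but it reaches \eqref{eq:L} by a different route from the paper.

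\textbf{Your route.} You apply Lagrange--Bürmann in its coefficient form $[u^k]H(w)=\frac1k[t^{k-1}]H'(t)\phi(t)^k$ directly to $H(t)=m\log(1+t)$. The identity then reduces, term by term, to $[t^{k-1}](1+t)^{mk-1}=\binom{mk-1}{k-1}$ together with the absorption identity $m\binom{mk-1}{k-1}=\binom{mk}{k}$.

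\textbf{The paper's route.} The paper uses the other form of Lagrange--Bürmann, $\sum_k u^k[w^k](f\phi^k)=f/(1-u\phi'(w))$, with $f=1$. This gives the closed form $\sum_{k\ge0}\binom{mk}{k}u^k=(1+w)/(1+(1-m)w)$. It then subtracts the constant term and divides by $u$ to get $dT/du$. Next it computes $w'$ by implicit differentiation and uses the chain rule to obtain $dT/dw=m/(1+w)$. Integrating this finishes the proof.

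\textbf{Comparison.} Your argument is shorter and avoids the formal calculus entirely. You never need to invert $dw/du$, re-expand $T$ as a series in $w$, or match constants of integration. The paper's detour has a by-product: the generating function \eqref{eq:LBraw} for $\binom{mk}{k}$, which it reuses in its remarks and numerical checks.

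Both arguments handle negative $m$ the same way, through $\phi(0)=1$ and generalized binomials. Your derivation of \eqref{eq:Lprime} via $u\mapsto -u$ and $v=-w(-u)$ is the same as the paper's. Your appeal to uniqueness there is fine: the same recursive argument applies to $v=u(1-v)^m$. Equivalently, the involution $v(u)\mapsto -v(-u)$ exchanges solutions of the two equations.
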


\begin{proof}
The implication \eqref{eq:L} $\Rightarrow$ \eqref{eq:Lprime} is
obtained by replacing $u$ by $-u$ and setting $v = -w(-u)$, which
maps~\eqref{eq:Phi} to $v = u(1-v)^m$ and $\log(1+w(-u))$ to
$\log(1-v)$.

So we prove~\eqref{eq:L}. Equation~\eqref{eq:Phi} has a unique
solution $w \in u\,\QQ[[u]]$ because $\phi(w) := (1+w)^{m}$ has
$\phi(0) = 1 \ne 0$ --- even when $m$ is a negative integer, in which
case $(1+w)^m$ is read as the formal series
$\sum_{j \ge 0} \binom{-|m|}{j}\, w^j$. This is the standard
Lagrange-inversion hypothesis.

Differentiate~\eqref{eq:Phi} implicitly with respect to $u$:
\[
  w' = (1+w)^{m} + u\,m\,(1+w)^{m-1}\, w'.
\]
Solving for $w'$ and using $u\,(1+w)^{m-1} = w/(1+w)$ (which follows
from~\eqref{eq:Phi} after dividing both sides by $1+w$),
\begin{equation}\label{eq:wprime}
   w' \;=\;
   \frac{(1+w)^{m}}{1 - mw/(1+w)}
   \;=\;
   \frac{(1+w)^{m+1}}{1 + (1-m)\,w}.
\end{equation}

By the Lagrange--Bürmann coefficient-extraction formula
(\cite[Theorem~5.4.2]{StanleyEC2}), for $\phi(w) = (1+w)^m$ and any
formal series $f$,
\[
  \sum_{k \ge 0} u^{k}\, [w^{k}]\bigl(f(w)\,\phi(w)^{k}\bigr)
  \;=\; \frac{f(w)}{1 - u\,\phi'(w)}.
\]
Applying this with $f(w) = 1$ and using
$[w^{k}]\bigl((1+w)^{mk}\bigr) = \binom{mk}{k}$,
\begin{equation}\label{eq:LBraw}
  \sum_{k \ge 0}\binom{mk}{k}\, u^{k}
  \;=\; \frac{1}{1 - u\,m\,(1+w)^{m-1}}
  \;=\; \frac{1}{1 - mw/(1+w)}
  \;=\; \frac{1+w}{1 + (1-m)\,w},
\end{equation}
where the second equality is again $u(1+w)^{m-1} = w/(1+w)$.

Subtracting the $k = 0$ term and dividing by $u$,
\begin{equation}\label{eq:dTdU}
  \sum_{k \ge 1}\binom{mk}{k}\, u^{k-1}
  \;=\;
  \frac{1}{u}\!\left(\frac{1+w}{1+(1-m)w} - 1\right)
  \;=\;
  \frac{1}{u}\cdot\frac{m\,w}{1+(1-m)\,w}
  \;=\;
  \frac{m\,(1+w)^{m}}{1 + (1-m)\,w},
\end{equation}
where the last equality uses $1/u = (1+w)^{m}/w$ from~\eqref{eq:Phi}.

Set
\[
  T(u) \;:=\; \sum_{k \ge 1} \frac{1}{k}\binom{mk}{k}\, u^{k}
       \;\in\; u\,\QQ[[u]].
\]
By~\eqref{eq:dTdU}, $dT/du$ equals the right-hand side
of~\eqref{eq:dTdU}. Combining with~\eqref{eq:wprime} and using the
chain rule with $w = w(u)$,
\[
  \frac{dT}{dw}
  \;=\; \frac{dT}{du}\cdot \frac{du}{dw}
  \;=\;
  \frac{m\,(1+w)^{m}}{1+(1-m)\,w}
  \cdot
  \frac{1+(1-m)\,w}{(1+w)^{m+1}}
  \;=\;
  \frac{m}{1+w}.
\]
Integrating with $T(0) = 0$ and $w(0) = 0$ gives
$T(u) = m\,\log(1 + w)$, which is~\eqref{eq:L}.
\end{proof}

\begin{remark}\label{rem:fuss-catalan}
For each fixed $m$ the series $w$ is a Fuss--Catalan-type series:
$1 + w$ is the generating function of $m$-ary trees (shifted), and
$(1+w)/(1+(1-m)w)$ in~\eqref{eq:LBraw} is the well-known generating
function for the binomials $\binom{mk}{k}$. For $m = 2$,
\eqref{eq:Phi} solves to $1+w = 2/(1+\sqrt{1-4u})$ and
Lemma~\ref{lem:LB} specialises to the classical identity
$\sum_{k\ge1}\binom{2k}{k} u^k/k = 2\log\bigl(2/(1+\sqrt{1-4u})\bigr)$.
\end{remark}

%----------------------------------------------------------------------
\section{The fixed-point substitution}\label{sec:substitution}
%----------------------------------------------------------------------

Lemma~\ref{lem:LB} evaluates the inner sum in~\eqref{eq:Am-Tu} once
$v(u)$ is computed under $u = x/(1-x)^m$. The next lemma is the
identification we need.

\begin{lemma}\label{lem:fp}
Let $v(u) \in u\,\QQ[[u]]$ be the unique formal solution of
$v = u(1-v)^m$. Under the formal substitution
$u = x/(1-x)^m$, one has
\(
   v(u) \;=\; x
\)
as formal power series in $x$.
\end{lemma}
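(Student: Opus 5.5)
The plan is to prove Lemma~\ref{lem:fp} from the uniqueness of solutions to the fixed-point equation, applied in the ring $\QQ[[x]]$ instead of $\QQ[[u]]$. First, I would check that the substitution is legitimate. The series $u(x) := x/(1-x)^m = x\sum_{j\ge0}\binom{-m}{j}(-x)^j$ lies in $x\,\QQ[[x]]$ and has $[x^1]u = 1$, so it has zero constant term. Hence for any $g \in \QQ[[u]]$ the composition $g(u(x))$ is a well-defined element of $\QQ[[x]]$. Composition with $u(x)$ is moreover a ring homomorphism $\QQ[[u]]\to\QQ[[x]]$ that commutes with forming $(1-\cdot)^m$, since $(1-v)^m = \sum_j \binom{m}{j}(-v)^j$ is itself a composition with a fixed series. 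It follows that $V(x) := v(u(x))\in x\,\QQ[[x]]$ satisfies
\[
V(x) = u(x)\,\bigl(1-V(x)\bigr)^m .
\]

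Second, I would observe that $x$ itself satisfies the same equation:
\[
u(x)\,(1-x)^m = \frac{x}{(1-x)^m}\,(1-x)^m = x ,
\]
using $(1-x)^{-m}(1-x)^m = 1$ in $\QQ[[x]]$, which is valid for every integer $m$, negative ones included.

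The remaining step, and the main point to get right, is uniqueness of the solution in $x\,\QQ[[x]]$ of
\[
Y = u(x)\,(1-Y)^m .
\]
I would argue by comparing coefficients. Because $u(x)$ has zero constant term, the coefficient $[x^n]$ of the right-hand side depends only on $[x^0]Y,\dots,[x^{n-1}]Y$. The equation therefore determines the coefficients of $Y$ recursively, starting from $[x^0]Y = 0$. An equivalent route is a contraction argument: if $Y_1, Y_2$ are two solutions and $Y_1 - Y_2$ has order $d$, then
\[
(1-Y_1)^m - (1-Y_2)^m = (Y_2 - Y_1)\cdot(\text{a power series})
\]
also has order at least $d$. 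Multiplying by $u(x)$, which has order $1$, shows that $Y_1 - Y_2 = u(x)\bigl[(1-Y_1)^m - (1-Y_2)^m\bigr]$ has order at least $d+1$, a contradiction unless $Y_1 = Y_2$.

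Applying uniqueness to $V$ and $x$ gives $V = x$, that is, $v(u) = x$ under $u = x/(1-x)^m$. The only subtlety I expect is justifying that composition is a homomorphism and preserves the binomial-series meaning of $(1-v)^m$ for negative $m$. I would dispatch this by the standard fact that substitution of a series with zero constant term is a continuous ring homomorphism on formal power series.
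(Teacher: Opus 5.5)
Your proposal is correct and follows essentially the same route as the paper. Both show that $V(x)=v\bigl(x/(1-x)^m\bigr)$ and $x$ solve $Y=\frac{x}{(1-x)^m}(1-Y)^m$ in $x\,\QQ[[x]]$, and conclude by uniqueness; the paper gets uniqueness from the same $x$-adic contraction estimate ($\xi_1\equiv\xi_2 \pmod{x^N}$ implies $G(\xi_1)\equiv G(\xi_2)\pmod{x^{N+1}}$) that your order argument uses. Your extra justification that substituting $u(x)$ is a ring homomorphism compatible with the binomial series for negative $m$ fills in a step the paper only asserts.
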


\begin{proof}
Let $V(x) := v\!\bigl(x/(1-x)^{m}\bigr) \in x\,\QQ[[x]]$ be the
formal-series composition. Substituting $u = x/(1-x)^{m}$ into
$v = u(1-v)^m$ shows that $V$ satisfies
\begin{equation}\label{eq:Vfp}
  V(x) \;=\; \frac{x}{(1-x)^{m}}\,\bigl(1 - V(x)\bigr)^{m}.
\end{equation}

Define the map $G : x\,\QQ[[x]] \to x\,\QQ[[x]]$ by
\[
  G(\xi) \;:=\; \frac{x}{(1-x)^{m}}\,(1 - \xi)^{m},
\]
where $(1-\xi)^{m}$ is the binomial formal series
$\sum_{j \ge 0}\binom{m}{j}(-\xi)^{j}$ when $m \ge 0$, and the
analogous expansion $\sum_{j \ge 0}\binom{-|m|+j-1}{j}(-1)^{j}\xi^{j}$
when $m < 0$. In either case $(1-\xi)^m \in 1 + \xi\,\QQ[[\xi]]$ for
$\xi(0) = 0$. So $G$ is well-defined.

\emph{$G$ is a strict contraction in the $x$-adic topology.}
Indeed, if $\xi_1 \equiv \xi_2 \pmod{x^{N}}$ for some $N \ge 1$, then
$(1-\xi_1)^m - (1-\xi_2)^m \in x^{N}\,\QQ[[x]]$ by direct binomial
expansion (each coefficient of the difference is a polynomial in
$\xi_1 - \xi_2$ with no constant term). Multiplying by
$x/(1-x)^m \in x\,\QQ[[x]]$ shifts the order by one further, so
\[
  G(\xi_1) \;\equiv\; G(\xi_2) \pmod{x^{N+1}}.
\]
By the formal Banach fixed-point theorem (the $x$-adic completion of
$\QQ[x]$ is complete), $G$ has a unique fixed point in
$x\,\QQ[[x]]$.

\emph{Exhibiting the fixed point.} Take $\xi = x$. Then
$G(x) = x\,(1-x)^{m}/(1-x)^{m} = x$, so $x$ is a fixed point of $G$.
By uniqueness, $V(x) = x$.
\end{proof}

%----------------------------------------------------------------------
\section{Proof of Theorem~\ref{thm:main}}\label{sec:proof}
%----------------------------------------------------------------------

\begin{proof}[Proof of Theorem~\ref{thm:main}]
Substitute~\eqref{eq:Lprime} into~\eqref{eq:Am-Tu} with the
$v$ in~\eqref{eq:Lprime} viewed under $u = x/(1-x)^m$. By
Lemma~\ref{lem:fp} this $v$ equals $x$, hence
$\log(1 - v) = \log(1 - x)$ and
\[
  A_m(x)
  \;=\;
  \frac{1}{m\,(1-x)}\cdot\bigl(-m\,\log(1 - x)\bigr)
  \;=\;
  \frac{-\log(1-x)}{1-x}.
\]
The right-hand side is the standard generating function
$\sum_{n \ge 1} \Hn\, x^n$, obtained by Cauchy-multiplying the two
series $-\log(1-x) = \sum_{n \ge 1} x^n/n$ and
$1/(1-x) = \sum_{n \ge 0} x^n$. Comparing coefficients of $x^n$ on
both sides of $A_m(x) = -\log(1-x)/(1-x)$ yields $S_m(n) = \Hn$ for
every $n \ge 1$, which is~\eqref{eq:main}.
\end{proof}

%----------------------------------------------------------------------
\section{Remarks and examples}\label{sec:remarks}
%----------------------------------------------------------------------

\begin{remark}[Extension to complex $m$]\label{rem:complex-m}
For each fixed $n \ge 1$, both sides of~\eqref{eq:main} are
\emph{rational functions of $m$} (after clearing the leading $1/m$,
they are polynomials in $m$ of degree at most $n - 1$). The proof
above establishes equality of these rational functions on the
infinite set $\ZZ \setminus \{0\}$ of nonzero integers. Two rational
functions agreeing on an infinite set are equal as rational
functions, so~\eqref{eq:main} holds for every complex $m \ne 0$ when
both sides are interpreted as rational functions of $m$.
\end{remark}

\begin{example}[Small $m$]
The cases $m = 1, 2, 3, 4$ of~\eqref{eq:main} read
\begin{align*}
  m=1: \quad &
  \Hn = \sum_{k=1}^{n} \frac{(-1)^{k+1}}{k}\binom{n}{k},
  \\[1mm]
  m=2: \quad &
  \Hn = \tfrac12 \sum_{k=1}^{n} \frac{(-1)^{k+1}}{k}
        \binom{2k}{k}\binom{n+k}{n-k},
  \\[1mm]
  m=3: \quad &
  \Hn = \tfrac13 \sum_{k=1}^{n} \frac{(-1)^{k+1}}{k}
        \binom{3k}{k}\binom{n+2k}{n-k},
  \\[1mm]
  m=4: \quad &
  \Hn = \tfrac14 \sum_{k=1}^{n} \frac{(-1)^{k+1}}{k}
        \binom{4k}{k}\binom{n+3k}{n-k}.
\end{align*}
The first is the classical formula~\cite[(5.41)]{ConcreteMath}; the
second is Detlefs~\cite{DetlefsOEIS}; the rest are special cases of
Theorem~\ref{thm:main}.
\end{example}

\begin{example}[Negative $m$]\label{ex:m-neg}
For $m = -1$, the formula~\eqref{eq:main} becomes
\[
  \Hn
  \;=\;
  -\sum_{k=1}^{n}\frac{(-1)^{k+1}}{k}\,
        \binom{-k}{k}\binom{n - 2k}{n - k},
\]
where the binomials are the generalized binomials defined in
Section~\ref{sec:intro}. Most terms vanish because
$\binom{n-2k}{n-k} = 0$ for $\lceil n/2 \rceil \le k \le n - 1$ (the
top index lies in $\{0, 1, \dots, n - k - 1\}$). For $n = 4$ the only
nonzero terms are $k = 3$ and $k = 4$, contributing $20/3$ and
$-35/4$ respectively, with sum $-25/12$, so $-(-25/12) = 25/12 = H_4$.
The identity~\eqref{eq:main} has been numerically verified as exact
rationals for $m \in \{-7, -5, -4, -3, -2, -1, 1, 2, \dots, 8, 10, 15\}$
and $n \in \{1, \dots, 40\}$ --- all $640$ instances agree.
\end{example}

\begin{remark}[Combinatorial interpretation]
The factor $\binom{mk}{k}/k$ has, up to a Fuss--Catalan factor of
$(m-1)k + 1$, the meaning of the number of $m$-ary trees with $k$
internal nodes; cf.\ Stanley~\cite[\S6.2]{StanleyEC2} or
Knuth~\cite[\S7.2.1.6]{Knuth4A}. The factor $\binom{n+(m-1)k}{n-k}$
counts non-decreasing integer sequences of length $n - k$ with values
in $\{0, 1, \dots, (m-1)k\}$. So~\eqref{eq:main} expresses $\Hn$ as
an inclusion--exclusion sum over a parameter $k$ that selects an
$m$-ary tree size and a corresponding "stretching" sequence. A purely
bijective proof of~\eqref{eq:main} that does not pass through
generating functions would be of independent interest.
\end{remark}

\begin{remark}[An infinite family of representations]
As $m$ ranges over $\ZZ \setminus \{0\}$, \eqref{eq:main} gives an
infinite family of distinct binomial-coefficient representations of
$\Hn$. By Remark~\ref{rem:complex-m}, the family extends to a single
algebraic relation in the parameter $m$. The "right" coefficient
identities --- those obtained by extracting $[m^j]$ from the polynomial
identity in Remark~\ref{rem:complex-m} --- encode binomial sums for
the elementary symmetric polynomials in $\{1/1, 1/2, \dots, 1/n\}$
and may merit further investigation.
\end{remark}

%----------------------------------------------------------------------
\section{Numerical verification}\label{sec:numerics}
%----------------------------------------------------------------------

The identity~\eqref{eq:main} and the four intermediate identities used
in the proof --- \eqref{eq:L}, \eqref{eq:Lprime}, the classical
evaluation \eqref{eq:LBraw}, and the substitution $V(x) = x$ of
Lemma~\ref{lem:fp} --- have been verified by direct computation:

\begin{itemize}[topsep=4pt, itemsep=2pt]
  \item Both sides of~\eqref{eq:main} computed as \emph{exact
    rationals} (Python \texttt{fractions.Fraction}) for
    $m \in \{-7,-5,-4,-3,-2,-1,1,2,3,4,5,6,7,8,10,15\}$ and
    $n \in \{1, 2, \dots, 40\}$ --- all $640$ instances agree
    exactly. See Appendix~\ref{app:verify}.
  \item The four intermediate formal-power-series identities have all
    been checked coefficient-by-coefficient up to degree $8$ in $u$
    (resp.\ $x$) for $m \in \{-3, -2, -1, 1, 2, 3, 4, 5\}$.
    See Appendix~\ref{app:check}.
\end{itemize}

The two verification scripts are self-contained Python (standard
library only, no third-party dependencies) and reproduce in seconds.
They are provided in full in Appendices~\ref{app:verify}
and~\ref{app:check}.

%----------------------------------------------------------------------

%----------------------------------------------------------------------
% Acknowledgements (omitted from the anonymized journal version).
%----------------------------------------------------------------------
\section*{Acknowledgements}

The author thanks Peter Bala for posing the conjecture proved here as
a comment to OEIS sequence A001008, and the maintainers of the OEIS
for hosting an indispensable repository of conjectured identities. The
author is also grateful to Lutz Warnke for kindly drawing attention to
the recent elementary inductive proof of the Lagrange inversion formula
in~\cite{SuryaWarnke}.

%----------------------------------------------------------------------
\appendix
%----------------------------------------------------------------------

\section{Exact-rational verification of identity \texorpdfstring{\eqref{eq:main}}{(B\_m)}}%
\label{app:verify}

The script below evaluates both sides of identity~\eqref{eq:main} as
exact rationals (\texttt{fractions.Fraction}), without any floating
point, for every pair
$(m, n)$ with
$m \in \{-7,-5,-4,-3,-2,-1,1,2,3,4,5,6,7,8,10,15\}$ and
$n \in \{1, \dots, 40\}$ --- $640$ instances. All instances match.

% (lstinputlisting) verify_bala.py
\begin{lstlisting}[style=python, caption={\texttt{verify\_bala.py}: exact-rational verification of~\eqref{eq:main}.}, label={lst:verify}]
"""
Verify Peter Bala's conjecture (2022) on A001008:
For any nonzero integer m,
  H(n) = (1/m) * sum_{k=1..n} (-1)^(k+1) / k * C(m*k, k) * C(n + (m-1)*k, n-k)

where H(n) = 1 + 1/2 + ... + 1/n.

Strategy: compute both sides as exact Fraction, compare equality.
"""

from fractions import Fraction
from math import comb


def harmonic(n: int) -> Fraction:
    """Compute H(n) = sum_{i=1}^n 1/i exactly."""
    h = Fraction(0)
    for i in range(1, n + 1):
        h += Fraction(1, i)
    return h


def gen_binom(top: int, k: int) -> int:
    """Compute generalized binomial coefficient C(top, k) for integer top, k >= 0.
    Uses falling factorial: top * (top-1) * ... * (top - k + 1) / k!.
    Equals math.comb when top >= 0, and uses the extension when top < 0.
    """
    if k < 0:
        return 0
    if top >= 0:
        if top < k:
            return 0
        return comb(top, k)
    # top < 0: use falling factorial.  C(top, k) = (-1)^k * C(-top + k - 1, k)
    return (-1) ** k * comb(-top + k - 1, k)


def bala_rhs(n: int, m: int) -> Fraction:
    """Compute Peter Bala's conjectured formula RHS:
    (1/m) * sum_{k=1..n} (-1)^(k+1) / k * C(m*k, k) * C(n + (m-1)*k, n-k)
    """
    s = Fraction(0)
    for k in range(1, n + 1):
        sign = 1 if k % 2 == 1 else -1
        term = Fraction(sign, k) * gen_binom(m * k, k) * gen_binom(n + (m - 1) * k, n - k)
        s += term
    return s / m


def main() -> None:
    print("Verifying Bala's conjecture: H(n) == (1/m) * sum_{k=1..n} (-1)^(k+1)/k * C(mk,k) * C(n+(m-1)k, n-k)")
    print()

    # m must be nonzero integer. Negative m uses generalized (falling-factorial) binomial.
    test_ms = [-7, -5, -4, -3, -2, -1, 1, 2, 3, 4, 5, 6, 7, 8, 10, 15]
    test_ns = list(range(1, 41))

    all_ok = True
    for m in test_ms:
        for n in test_ns:
            lhs = harmonic(n)
            rhs = bala_rhs(n, m)
            ok = lhs == rhs
            if not ok:
                print(f"  m={m:+d} n={n:2d}  H(n)={lhs}  RHS={rhs}  [FAIL]")
                all_ok = False
        print(f"  m={m:+d}: all n in 1..{test_ns[-1]} verified  OK")
    print()
    print("All tests passed." if all_ok else "FAILURES detected.")


if __name__ == "__main__":
    main()
\end{lstlisting}

\medskip

\noindent Output (abridged):
\begin{verbatim}
Verifying Bala's conjecture: H(n) == (1/m) * sum_{k=1..n} (-1)^(k+1)/k
                                          * C(mk,k) * C(n+(m-1)k, n-k)

  m=-7: all n in 1..40 verified  OK
  m=-5: all n in 1..40 verified  OK
  m=-4: all n in 1..40 verified  OK
  m=-3: all n in 1..40 verified  OK
  m=-2: all n in 1..40 verified  OK
  m=-1: all n in 1..40 verified  OK
  m=+1: all n in 1..40 verified  OK
  m=+2: all n in 1..40 verified  OK
  m=+3: all n in 1..40 verified  OK
  m=+4: all n in 1..40 verified  OK
  m=+5: all n in 1..40 verified  OK
  m=+6: all n in 1..40 verified  OK
  m=+7: all n in 1..40 verified  OK
  m=+8: all n in 1..40 verified  OK
  m=+10: all n in 1..40 verified  OK
  m=+15: all n in 1..40 verified  OK

All tests passed.
\end{verbatim}

\medskip

\noindent The script depends only on the Python standard library
(\texttt{fractions.Fraction} and \texttt{math.comb}). The total runtime
is a fraction of a second on commodity hardware. Generalised binomial
coefficients $\binom{a}{b}$ for negative $a$ are evaluated by the
falling-factorial extension
$\binom{-k}{j} = (-1)^j \binom{k + j - 1}{j}$,
matching the convention of Section~\ref{sec:intro}.

%----------------------------------------------------------------------

\section{Formal-power-series verification of the four intermediate identities}%
\label{app:check}

This second script verifies, by truncating to degree $N = 8$ and
comparing coefficients, the four formal-power-series identities used
in the proof of Theorem~\ref{thm:main}:

\begin{enumerate}[label=(\roman*), topsep=2pt, itemsep=2pt]
  \item the Lagrange identity~\eqref{eq:L} of Lemma~\ref{lem:LB},
  \item its sign-alternating version~\eqref{eq:Lprime},
  \item the classical evaluation
    $\sum_{k \ge 0}\binom{mk}{k}u^k = (1+w)/(1+(1-m)w)$
    of~\eqref{eq:LBraw}, and
  \item the substitution $V(x) = x$ of Lemma~\ref{lem:fp}.
\end{enumerate}

For each $m \in \{-3,-2,-1,1,2,3,4,5\}$, the implicit fixed-point
equation $w = u(1+w)^m$ is solved iteratively as a truncated power
series, and the four identities are then verified coefficient-by-coefficient.
All $4 \times 8 = 32$ instances pass.

% (lstinputlisting) check_proof.py
\begin{lstlisting}[style=python, caption={\texttt{check\_proof.py}: formal-power-series verification of the four intermediate identities.}, label={lst:check}]
"""
Sanity-check the key identities used in the proof of Bala's harmonic-number conjecture:

  (I)   sum_{k>=1} (1/k) C(m*k, k) u^k = m * log(1 + w),  where w = u (1+w)^m
  (II)  sum_{k>=1} ((-1)^(k+1) / k) C(m*k,k) u^k = -m * log(1 - v),
        where v = u (1-v)^m.

Both checked as power-series identities, by truncating to degree N and comparing
coefficients of [u^k].

We also verify the key substitution: if u = x/(1-x)^m, then the power-series
solution v(u) of v = u(1-v)^m satisfies v = x, and so

  A_m(x) := sum_{n>=1} S_m(n) x^n = -log(1-x) / (1-x) = sum_{n>=1} H_n * x^n,

confirming Bala's identity S_m(n) = H_n.
"""

from fractions import Fraction
from math import comb


def gen_binom(top: int, k: int) -> int:
    """Generalized binomial C(top, k) for integer top and k >= 0.
    Uses falling factorial extension for negative top."""
    if k < 0:
        return 0
    if top >= 0:
        if top < k:
            return 0
        return comb(top, k)
    return (-1) ** k * comb(-top + k - 1, k)


def truncate_log1p(series_coeffs: list[Fraction], N: int) -> list[Fraction]:
    """Given coefficients of f(u) = sum c_k u^k with c_0 = 0,
    compute coefficients of log(1 + f(u)) up to degree N.
    """
    # Use formal power series logarithm via integration of f'/(1+f).
    # Easier: log(1+f) where f = sum c_k u^k, c_0 = 0.
    # Method: g = log(1+f) satisfies (1+f) g' = f' so we can extract coefs recursively:
    # If g = sum d_k u^k, d_0 = 0, then g' = sum k d_k u^{k-1},
    # f' = sum k c_k u^{k-1}, and (1+f) g' = f', i.e.,
    # sum_k k d_k u^{k-1} + sum_k (sum_{i+j=k} c_i (j+1) d_{j+1}) u^k = sum_k k c_k u^{k-1}.
    # Equating coefficient of u^{n-1} (i.e., consider n=k):
    # n d_n + sum_{i+j = n-1, j+1 >= 1} c_i (j+1) d_{j+1} = n c_n
    # Rearranging: n d_n = n c_n - sum_{i=1..n-1} c_i (n-i) d_{n-i}
    # So d_n = c_n - (1/n) sum_{i=1..n-1} c_i (n-i) d_{n-i}.
    f = list(series_coeffs)
    while len(f) <= N:
        f.append(Fraction(0))
    d = [Fraction(0)] * (N + 1)
    for n in range(1, N + 1):
        s = Fraction(0)
        for i in range(1, n):
            s += f[i] * (n - i) * d[n - i]
        d[n] = f[n] - s / n
    return d


def lagrange_solve(phi: list[Fraction], N: int) -> list[Fraction]:
    """Given phi = [phi_0, phi_1, ...] coefficients of a power series with phi_0 != 0,
    compute the unique power series w(u) = sum_{k>=1} a_k u^k satisfying w = u * phi(w),
    truncated to degree N.

    Uses the recursion implied by w = u * phi(w): write w = u * phi(w) and
    repeatedly substitute. We use the formal power series approach.
    """
    # Iteratively compute w. w_0 = 0; w_{k+1} = u * phi(w_k), truncated to deg N.
    w = [Fraction(0)] * (N + 1)
    for _ in range(N + 1):
        # phi(w) up to degree N (composition).
        phi_w = [Fraction(0)] * (N + 1)
        # phi_w = sum_j phi_j * w^j
        wpow = [Fraction(0)] * (N + 1)
        wpow[0] = Fraction(1)  # w^0 = 1
        for j in range(len(phi)):
            for d in range(N + 1):
                phi_w[d] += phi[j] * wpow[d]
            # Update wpow = wpow * w (truncated)
            new_wpow = [Fraction(0)] * (N + 1)
            for a in range(N + 1):
                if wpow[a] == 0:
                    continue
                for b in range(N + 1 - a):
                    new_wpow[a + b] += wpow[a] * w[b]
            wpow = new_wpow
        # New w = u * phi_w, i.e., shift right by 1.
        new_w = [Fraction(0)] * (N + 1)
        for d in range(N):
            new_w[d + 1] = phi_w[d]
        w = new_w
    return w


def series_mul(a: list[Fraction], b: list[Fraction], N: int) -> list[Fraction]:
    out = [Fraction(0)] * (N + 1)
    for i in range(min(len(a), N + 1)):
        if a[i] == 0:
            continue
        for j in range(min(len(b), N + 1 - i)):
            out[i + j] += a[i] * b[j]
    return out


def series_inverse(a: list[Fraction], N: int) -> list[Fraction]:
    """Multiplicative inverse of a power series with a[0] != 0."""
    inv = [Fraction(0)] * (N + 1)
    inv[0] = 1 / a[0]
    for n in range(1, N + 1):
        s = Fraction(0)
        for i in range(1, min(n, len(a) - 1) + 1):
            s += a[i] * inv[n - i]
        inv[n] = -s / a[0]
    return inv


def main() -> None:
    N = 8
    print(f"Verifying identities to degree {N}.")
    print()

    for m in (-3, -2, -1, 1, 2, 3, 4, 5):
        # LHS: sum_{k>=1} (1/k) C(mk, k) u^k. Use generalized binomial.
        lhs = [Fraction(0)] * (N + 1)
        for k in range(1, N + 1):
            lhs[k] = Fraction(1, k) * gen_binom(m * k, k)

        # Compute w with phi(w) = (1+w)^m (coefficients), truncated to deg N.
        # For m >= 0, phi is polynomial of degree m.  For m < 0, it's an infinite series.
        phi_len = (m + 1) if m >= 0 else (N + 1)
        phi = [Fraction(gen_binom(m, j)) for j in range(phi_len)]
        w = lagrange_solve(phi, N)

        # RHS: m * log(1 + w).
        log1pw = truncate_log1p(w, N)
        rhs = [m * c for c in log1pw]

        ok = lhs == rhs
        print(f"  m={m}  Identity (I) [sum (1/k) C(mk,k) u^k = m log(1+w)]  OK={ok}")
        if not ok:
            print(f"    LHS: {lhs}")
            print(f"    RHS: {rhs}")

        # Check identity (II): sum (-1)^(k+1)/k * C(mk,k) u^k = -m log(1-v) with v = u(1-v)^m.
        lhs2 = [Fraction(0)] * (N + 1)
        for k in range(1, N + 1):
            lhs2[k] = Fraction((-1) ** (k + 1), k) * gen_binom(m * k, k)
        # phi2(v) = (1-v)^m, truncated to deg N for m < 0.
        phi2_len = (m + 1) if m >= 0 else (N + 1)
        phi2 = [Fraction((-1) ** j * gen_binom(m, j)) for j in range(phi2_len)]
        v = lagrange_solve(phi2, N)
        # log(1 - v) = log(1 + (-v)).
        neg_v = [-c for c in v]
        log1mv = truncate_log1p(neg_v, N)
        rhs2 = [-m * c for c in log1mv]
        ok2 = lhs2 == rhs2
        print(f"  m={m}  Identity (II) [sum (-1)^(k+1)/k C(mk,k) u^k = -m log(1-v)]  OK={ok2}")
        if not ok2:
            print(f"    LHS: {lhs2}")
            print(f"    RHS: {rhs2}")

        # Check that when u = x/(1-x)^m, v = x.
        # This is a substitution check at the formal level: use power series in x.
        Nx = N
        u_of_x = [Fraction(0)] * (Nx + 1)
        # u = x * (1-x)^{-m} = x * sum_{j>=0} C(m+j-1, j) x^j (generalized binom for negative m).
        for j in range(Nx):
            u_of_x[j + 1] = Fraction(gen_binom(m + j - 1, j))
        # Compute v_of_x via composition v(u(x)).
        # We need v as a power series in u, then substitute u = u_of_x.
        # v(u) = sum v_k u^k where v from above.
        # Compose: v_of_x = sum v_k * (u_of_x)^k.
        v_of_x = [Fraction(0)] * (Nx + 1)
        upowx = [Fraction(0)] * (Nx + 1)
        upowx[0] = Fraction(1)
        for k in range(Nx + 1):
            for d in range(Nx + 1):
                v_of_x[d] += v[k] * upowx[d]
            new_upowx = [Fraction(0)] * (Nx + 1)
            for a in range(Nx + 1):
                if upowx[a] == 0:
                    continue
                for b in range(Nx + 1 - a):
                    new_upowx[a + b] += upowx[a] * u_of_x[b]
            upowx = new_upowx
        target = [Fraction(0)] * (Nx + 1)
        target[1] = Fraction(1)  # v = x
        ok3 = v_of_x == target
        print(f"  m={m}  Substitution u = x/(1-x)^m gives v = x  OK={ok3}")
        if not ok3:
            print(f"    v_of_x: {v_of_x}")

        # Identity (5): sum_{k>=0} C(mk,k) u^k = (1+w)/(1+(1-m)w).
        bcoef = [Fraction(gen_binom(m * k, k)) for k in range(N + 1)]
        one_plus_w = [Fraction(0)] * (N + 1)
        one_plus_w[0] = Fraction(1)
        for k in range(N + 1):
            one_plus_w[k] += w[k]
        one_plus_1mw = [Fraction(0)] * (N + 1)
        one_plus_1mw[0] = Fraction(1)
        for k in range(N + 1):
            one_plus_1mw[k] += (1 - m) * w[k]
        denom_inv = series_inverse(one_plus_1mw, N)
        rhs5 = series_mul(one_plus_w, denom_inv, N)
        ok5 = bcoef == rhs5
        print(f"  m={m}  Identity (5) [sum C(mk,k) u^k = (1+w)/(1+(1-m)w)]  OK={ok5}")
        if not ok5:
            print(f"    LHS: {bcoef}")
            print(f"    RHS: {rhs5}")
        print()


if __name__ == "__main__":
    main()
\end{lstlisting}

\medskip

\noindent Output (abridged):
\begin{verbatim}
Verifying identities to degree 8.

  m=-3  Identity (I)  [sum (1/k) C(mk,k) u^k = m log(1+w)]    OK=True
  m=-3  Identity (II) [sum (-1)^(k+1)/k ...     = -m log(1-v)] OK=True
  m=-3  Substitution u = x/(1-x)^m gives v = x                 OK=True
  m=-3  Identity (5)  [sum C(mk,k) u^k = (1+w)/(1+(1-m)w)]     OK=True

  ...

  m=+5  Identity (I)                                           OK=True
  m=+5  Identity (II)                                          OK=True
  m=+5  Substitution u = x/(1-x)^m gives v = x                 OK=True
  m=+5  Identity (5)                                           OK=True
\end{verbatim}

\medskip

\noindent The script depends only on the Python standard library.
Power-series multiplication, multiplicative inverse, formal logarithm,
and the Lagrange-type fixed-point solver
($w = u\,\phi(w)$ for $\phi$ a polynomial or formal series) are all
implemented from scratch in $\sim 100$ lines, so the verification
itself has no third-party dependencies and is straightforward to
audit.


\begin{thebibliography}{99}

\bibitem{ConcreteMath}
R.~L.~Graham, D.~E.~Knuth, and O.~Patashnik,
\emph{Concrete Mathematics: A Foundation for Computer Science}
(2nd ed.),
Addison--Wesley, 1994.
[Equation (5.41).]

\bibitem{Knuth4A}
D.~E.~Knuth,
\emph{The Art of Computer Programming, Volume 4A: Combinatorial
Algorithms, Part 1},
Addison--Wesley, 2011.
[See \S7.2.1.6 on Fuss--Catalan numbers.]

\bibitem{StanleyEC2}
R.~P.~Stanley,
\emph{Enumerative Combinatorics, Volume~2},
Cambridge Studies in Advanced Mathematics 62,
Cambridge University Press, 1999.
[Theorem~5.4.2 --- the Lagrange--Bürmann formula in the form used in
Section~\ref{sec:lagrange}.]

\bibitem{SuryaWarnke}
E.~Surya and L.~Warnke,
Lagrange inversion formula by induction,
\emph{Amer. Math. Monthly} \textbf{130} (2023), 944--948.

\bibitem{OEIS-A001008}
N.~J.~A.~Sloane (founder), \emph{The On-Line Encyclopedia of Integer
Sequences}, sequence A001008, \url{https://oeis.org/A001008}, 2026.

\bibitem{BalaOEIS}
P.~Bala, comment in OEIS sequence A001008 (the conjecture proved
here), March~4, 2022.

\bibitem{DetlefsOEIS}
G.~Detlefs, comment in OEIS sequence A001008 (the case $m = 2$),
April~13, 2013.

\end{thebibliography}
\end{document}